\newtheorem{theorem}{Theorem}[section]
\newtheorem{proposition}[theorem]{Proposition}
\theoremstyle{remark}
\newtheorem{remark}{Remark}
\newtheorem{example}{Example}
\def\@eqnnum{(\theequation)}
\numberwithin{equation}{section}
\begin{document}


\title[Representations by quadratic forms with congruence conditions]
{On the number of representations of integers by quadratic forms with congruence conditions}

\author{Bumkyu Cho}
\address{Department of Mathematics, Dongguk University, 30 Pildong-ro 1-gil, Jung-gu, Seoul, 100-715, Republic of Korea}

\email{bam@dongguk.edu}

\subjclass[2010]{Primary 11N32; Secondary 11R37, 11F11}

\thanks{The author was supported by the Dongguk University Research Fund of 2014.}

\keywords{}

\dedicatory{}

\begin{abstract}
We characterize the generating functions of the numbers of representations described in the title in the context of modular forms. Appealing to this characterization we obtain explicit formulas for the representation numbers as examples.
\end{abstract}

\maketitle

\section{Introduction and statements of results}

In the previous work \cite{Cho1, Cho2, Cho3}, the author exploited class field theory to obtain characterizations of integers that can be expressed as $x^2 + ny^2$ or $x^2 + xy + ny^2$ ($n \in \mathbb N$) with extra conditions $x \equiv 1 \pmod m$, $y \equiv 0 \pmod m$ on the variables. For example we have for $a \in \mathbb N$ with $(a, 6) = 1$, $a$ is of the form $x^2 + y^2$ with $x \equiv 1 \pmod 3$, $y \equiv 0 \pmod 3$ if and only if $a$ has exactly even number of prime divisors that are congruent to $5$ modulo $12$ (see \cite[Example 2]{Cho2}). It is then natural to study on the number of such representations, i.e. to try to find the formula for
\[ r(a) \, := \, |\{ (x, y) \in \mathbb Z^2 \, | \, a = x^2 + y^2, \ x \equiv 1 \ (\bmod \ 3), \ y \equiv 0 \ (\bmod \ 3) \}| \]
where $a$ is a positive integer (see Example 1 in Section 3).

Up to the author's knowledge there are only a few results in this direction. To introduce those we first quote Jacobi's four-square theorem. If we let
\[ r_4(n) = |\{ (x_1, x_2, x_3, x_4) \in \mathbb Z^4 \, | \, x_1^2 + x_2^2 + x_3^2 + x_4^2 = n \}| \]
then he showed in 1834 that $r_4(n) = 8 \sigma(n) - 32 \sigma(n/4)$ where $\sigma(n) := \sum_{d|n} d$. In \cite[Vorlesung 11]{Hur} Hurwitz could also prove Jacobi's theorem by making use of the arithmetic of quaternions. Furthermore he was able to obtain that the number of representations of $4n$ ($n$ odd) as the sum of four odd squares is given as $16 \sigma(n)$. Recently, Deutsch \cite{Deu} also gave a quaternionic proof of the formula for the number of representations of a positive integer by the quadratic form $x_1^2 + x_2^2 + 2x_3^2 + 2x_4^2$. His approach also produces formulas for the numbers of representations of $4n$ and $8n$ ($n$ odd) by the same quadratic form with certain restrictions on the parity of the variables $x_1$, $x_2$, $x_3$, and $x_4$. For more details, see \cite[Theorem 58]{Deu}.

The purpose of this article is to find such kind of formulas by characterizing the generating functions of the representation numbers in the context of modular forms. For example we will obtain explicit formulas for the representation numbers
\[ r_Q^{u_1, u_2, u_3, u_4; 3}(n) \, := \, |\{ (x_1, x_2, x_3, x_4) \in \mathbb Z^4 \, | \, x_1^2 + x_2^2 + x_3^2 + x_4^2 = n, \ x_i \equiv u_i \ (\bmod \ 3) \}| \]
where $u_i \in \{ 0, 1, 2 \}$ (see Example 5 in Section 3).

\medskip

We are now going to introduce our theorems. Let $\mathbb H = \{ \tau \in \mathbb C \, | \, \mbox{Im} (\tau) > 0 \}$ be the complex upper half plane and $\mathbb H^\ast = \mathbb H \cup \mathbb Q \cup \{ \infty \}$. Here the elements of $\mathbb Q \cup \{ \infty \}$ are called cusps. The group $SL_2(\mathbb Z)$ acts on $\mathbb H^\ast$ by linear
fractional transformation $\gamma(\tau) = \frac{a \tau + b} {c
\tau + d}$ for $\gamma = ( \substack{ a \ b \\ c \ d} ) \in SL_2
(\mathbb Z)$. The principal congruence
subgroup $\Gamma(N)$ with $N \in \mathbb N$ is defined to be
\[ \Gamma(N) = \{ \big(\begin{smallmatrix} a & b \\ c & d \end{smallmatrix}\big) \in
SL_2 (\mathbb Z) \, | \, a \equiv d \equiv 1 \ (\bmod \ N), \ b \equiv c
\equiv 0 \ (\bmod \ N) \} \] and any subgroup of $SL_2(\mathbb
Z)$ containing $\Gamma(N)$ is called a congruence
subgroup of level $N$. In this article we mainly employ the congruence subgroups
$\Gamma_0(N)$ and $\Gamma_1(N)$ that are defined as follows: $\Gamma_0(N)$
(respectively, $\Gamma_1(N)$) consists of all $( \substack{ a \ b \\
c \ d} ) \in SL_2(\mathbb Z)$ such that $c \equiv 0 \pmod N$
(respectively, $c \equiv 0 \pmod N$ and $a \equiv d \equiv 1 \pmod
N$).

Let $k, N$ be positive integers and $\psi$ a Dirichlet character modulo $N$. The slash operator $|_{k, \psi}$ is defined as
\[ (f|_{k, \psi}\gamma)(\tau) = \bar{\psi}(d) (c \tau + d)^{-k} f(\gamma\tau) \]
where $f$ is a meromorphic function on $\mathbb H$ and $\gamma = (\begin{smallmatrix} a & b \\ c & d \end{smallmatrix}) \in \Gamma_0(N)$. It is tedious to verify that $(f|_{k, \psi} \gamma)|_{k, \psi} \gamma' = f|_{k, \psi} (\gamma \gamma')$ for $\gamma, \gamma' \in \Gamma_0(N)$.

Let $k$, $N$, $\psi$ be as above and let $\Gamma$ be a congruence subgroup of level $N$ contained in $\Gamma_0(N)$. By definition a modular form of weight $k$ for $\Gamma$ with nebentypus $\psi$ is a function $f$ satisfying

(1) $f$ is holomorphic on $\mathbb H$

(2) $f |_{k, \psi} \gamma = f$ for all $\gamma \in \Gamma$

(3) $f$ is holomorphic at all cusps.

If a modular form $f$ vanishes at all cusps, then it is called a cusp form. The $\mathbb C$-vector space of modular forms of weight $k$ for
$\Gamma$ with nebentypus $\psi$ is denoted $M_k(\Gamma, \psi)$. The subspace consisting of Eisenstein series (respectively, cusp forms) is denoted by $E_k(\Gamma, \psi)$ (respectively, $S_k(\Gamma, \psi)$). In case $\psi$ is the trivial character, we simply denote them by $M_k(\Gamma)$, $E_k(\Gamma)$, and $S_k(\Gamma)$. We also employ the standard notation $q = e^{2 \pi i \tau}$ for $\tau \in \mathbb H$.

\medskip

Let $Q(x) = \sum_{1 \leq i \leq j \leq n} a_{ij} x_i x_j$ be a positive-definite quadratic form where $n \in \mathbb N$ is even, $a_{ij} \in \mathbb Z$, and $x = {^t}(x_1 \ldots x_n)$. We denote by $A_Q$ the symmetric matrix of size $n$ over $\mathbb Z$ whose entries satisfy the following conditions:
\[ (A_Q)_{ij} = a_{ij} \quad \mbox{for } i < j, \qquad (A_Q)_{ii} = 2a_{ii}. \]
Observe that $Q(x) = \frac{1}{2} {^t}x A_Q x$. Let $D_Q = (-1)^{n/2} \det(A_Q)$ be the discriminant of the quadratic form $Q$. The level of the quadratic form $Q$ is defined to be the smallest integer $N_Q \in \mathbb N$ such that $N_QA_Q^{-1} \in \mathrm{M}_n(\mathbb Z)$ and all the diagonal entries of $N_Q A_Q^{-1}$ are even. In fact, it is known that
\[ N_Q = \frac{|D_Q|}{\gcd(c_{ij}, \frac{c_{ii}}{2} \, | \, 1 \leq i, j \leq n)} \]
where $c_{ij}$ denotes the $(i, j)$ cofactor of $A_Q$ (see Section 2).

We are now ready to introduce our theorems.

\begin{theorem}\label{Theorem - main}
Let $Q(x) = \sum_{1 \leq i \leq j \leq n} a_{ij} x_i x_j$ $(n$ even$)$ be a positive-definite integral quadratic form and $P(x)$ a spherical function of degree $\nu$ with respect to the coefficient matrix $A_Q$. For every $m \in \mathbb N$ and every $u \in \mathbb Z^n$, the function
\[ f_{Q, P}^{u; m}(\tau) \ := \ \sum_{x \in \mathbb Z^n \atop x \equiv u \,\, (\mathrm{mod} \, m)} P(x) q^{Q(x)} \]
is a modular form of weight $\frac{n}{2} + \nu$ for $\Gamma_0(m^2 N_Q) \cap \Gamma_1(m)$ with nebentypus $(\frac{D_Q}{\cdot})$. Furthermore, if $\nu \geq 1$, then it is a cusp form. Here $(\frac{D_Q}{\cdot})$ denotes the Kronecker symbol.
\end{theorem}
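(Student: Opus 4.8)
The plan is to remove the congruence condition by a linear substitution, turning $f_{Q,P}^{u;m}$ into a classical spherical theta series with a rational characteristic evaluated at $m^2\tau$, and then to read off the transformation law under $\Gamma_0(m^2N_Q)\cap\Gamma_1(m)$ by descending the classical theta transformation formula through $\tau\mapsto m^2\tau$. For the substitution, write $x=u+my$ with $y$ ranging over $\mathbb Z^n$. Since $P$ is homogeneous of degree $\nu$ and $Q$ of degree $2$, we get $P(u+my)=m^\nu P(y+\tfrac um)$ and $Q(u+my)=m^2Q(y+\tfrac um)$, so
\[ f_{Q,P}^{u;m}(\tau)\ =\ m^\nu\sum_{y\in\mathbb Z^n}P\!\Big(y+\tfrac um\Big)\,e^{2\pi i\,Q(y+u/m)\,m^2\tau}\ =\ m^\nu\,\Theta_{Q,P}^{(u/m)}(m^2\tau), \]
where $\Theta_{Q,P}^{(r)}(w):=\sum_{y\in\mathbb Z^n}P(y+r)\,e^{2\pi i Q(y+r)w}$ is the spherical theta series of $Q$ and $P$ with characteristic $r\in\mathbb Q^n$; note that $\Theta_{Q,P}^{(r)}$ depends on $r$ only modulo $\mathbb Z^n$, that $Q$ takes integer values on $\mathbb Z^n$, and that the leading form of $P(y+r)$ is the $A_Q$-harmonic polynomial $P(y)$.

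\textbf{Transformation law.} I would use the classical transformation formula for theta series attached to an even positive-definite form, an $A_Q$-spherical harmonic of degree $\nu$, and a characteristic --- the one obtained from Poisson summation together with the Bruhat-type factorization $\bigl(\begin{smallmatrix}\alpha&\beta\\\gamma&\delta\end{smallmatrix}\bigr)=\bigl(\begin{smallmatrix}1&\alpha/\gamma\\0&1\end{smallmatrix}\bigr)\bigl(\begin{smallmatrix}0&-1/\gamma\\\gamma&0\end{smallmatrix}\bigr)\bigl(\begin{smallmatrix}1&\delta/\gamma\\0&1\end{smallmatrix}\bigr)$ (for $\gamma\neq0$; $\gamma=0$ gives a translation). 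For $\bigl(\begin{smallmatrix}\alpha&\beta\\\gamma&\delta\end{smallmatrix}\bigr)\in SL_2(\mathbb Z)$ it expresses $\Theta_{Q,P}^{(r)}\big|_{n/2+\nu}\bigl(\begin{smallmatrix}\alpha&\beta\\\gamma&\delta\end{smallmatrix}\bigr)$ as a root of unity --- an eighth root times a normalized Gauss sum and phases depending on $r$ --- times a theta series of weight $n/2+\nu$ attached to $Q$, an $A_Q$-spherical polynomial of degree $\nu$, a translation characteristic $\equiv\delta r\pmod{\mathbb Z^n}$ up to an integer shift, and an exponential-character characteristic $\equiv-\beta r\pmod{\mathbb Z^n}$. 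Now fix a matrix $\gamma=\bigl(\begin{smallmatrix}a&b\\c&d\end{smallmatrix}\bigr)\in\Gamma_0(m^2N_Q)\cap\Gamma_1(m)$ and write $m^2\gamma\tau=\gamma'(m^2\tau)$ with $\gamma'=\bigl(\begin{smallmatrix}a&m^2b\\c/m^2&d\end{smallmatrix}\bigr)$; this lies in $SL_2(\mathbb Z)$ since $m^2\mid c$, and in $\Gamma_0(N_Q)$ since $m^2N_Q\mid c$. Applying the formula to $\Theta_{Q,P}^{(u/m)}\circ\gamma'$ at $w=m^2\tau$ and using $(c/m^2)(m^2\tau)+d=c\tau+d$: the automorphy factor becomes $(c\tau+d)^{n/2+\nu}$; the translation characteristic returns to $u/m$ modulo $\mathbb Z^n$ because $d\equiv1\pmod m$; the exponential-character characteristic $\equiv-(m^2b)(u/m)=-mbu$ lies in $\mathbb Z^n$ and so is trivial on $\mathbb Z^n$; the $A_Q$-spherical polynomial is $P$ itself, since for matrices in $\Gamma_0(N_Q)$ the level condition kills the Fourier dual exactly as in the classical case $m=1$; and the surviving scalar collapses to the Kronecker symbol $(\tfrac{D_Q}{d})$ by the same Gauss-sum and quadratic-reciprocity evaluation that yields $\theta_{Q,P}\big|_{n/2+\nu}\gamma'=(\tfrac{D_Q}{d})\,\theta_{Q,P}$ when $m=1$, the conditions $a\equiv d\equiv1\pmod m$ and $m^2\mid c$ being what annihilates the residual $r$-dependent phases. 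Hence $f_{Q,P}^{u;m}\big|_{n/2+\nu,(\frac{D_Q}{\cdot})}\gamma=f_{Q,P}^{u;m}$ for all $\gamma\in\Gamma_0(m^2N_Q)\cap\Gamma_1(m)$.

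\textbf{Behaviour at the cusps.} By the same transformation formula, $\Theta_{Q,P}^{(u/m)}$ is holomorphic at every cusp of a suitable $\Gamma(M)$: at a cusp $\sigma(\infty)$ the function $\Theta_{Q,P}^{(u/m)}\big|_{n/2+\nu}\sigma$ is, up to a root of unity, a theta series $\sum_{y}P^\ast(y+r^\ast)e^{2\pi i Q(y+r^\ast)w}$ with $P^\ast$ an $A_Q$-spherical polynomial of degree $\nu$ (equal to $P$ or its Fourier dual) and $r^\ast$ a rational characteristic, and since $Q$ is positive-definite its $q$-expansion is supported on the non-negative rationals $Q(y+r^\ast)$, $y\in\mathbb Z^n$. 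Because $f_{Q,P}^{u;m}=m^\nu\Theta_{Q,P}^{(u/m)}(m^2\tau)$ is obtained from $\Theta_{Q,P}^{(u/m)}$ by the operator $\tau\mapsto m^2\tau$, which preserves holomorphy at the cusps, $f_{Q,P}^{u;m}$ is holomorphic at all cusps; together with the previous paragraph this gives $f_{Q,P}^{u;m}\in M_{n/2+\nu}\big(\Gamma_0(m^2N_Q)\cap\Gamma_1(m),(\tfrac{D_Q}{\cdot})\big)$. If moreover $\nu\geq1$, then each $P^\ast$ above is homogeneous of positive degree, so $P^\ast(0)=0$; the constant term of $\sum_yP^\ast(y+r^\ast)e^{2\pi iQ(y+r^\ast)w}$ equals $P^\ast(0)$ if $r^\ast\in\mathbb Z^n$ and $0$ otherwise, hence vanishes in all cases, so $\Theta_{Q,P}^{(u/m)}$ is a cusp form and --- the operator $\tau\mapsto m^2\tau$ sending cusp forms to cusp forms --- so is $f_{Q,P}^{u;m}$.

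\textbf{Main obstacle.} The delicate point is the root-of-unity bookkeeping in the second paragraph: showing that over the precise group $\Gamma_0(m^2N_Q)\cap\Gamma_1(m)$ the automorphy factor furnished by the general theta transformation law carries no residual dependence on $u$, $b$, or $m$ beyond the congruence constraints and equals exactly $(\tfrac{D_Q}{d})$. This is a Gauss-sum computation through the Bruhat factorization in which the conditions $m^2\mid c$ and $a\equiv d\equiv1\pmod m$ are precisely calibrated to make the translation characteristic periodic, the exponential twist trivial on $\mathbb Z^n$, and the characteristic-dependent phases disappear; matching the surviving Gauss sum to the Kronecker symbol is the only genuinely substantial step. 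One could instead verify invariance on an explicit generating set of $\Gamma_0(m^2N_Q)\cap\Gamma_1(m)$ together with $T=\bigl(\begin{smallmatrix}1&1\\0&1\end{smallmatrix}\bigr)$ --- for which $f_{Q,P}^{u;m}(\tau+1)=f_{Q,P}^{u;m}(\tau)$ is immediate from $Q(\mathbb Z^n)\subseteq\mathbb Z$ --- but the Poisson-summation input is the same.
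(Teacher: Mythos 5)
Your outline follows the same broad strategy as the paper --- rewrite the congruence-restricted sum as a theta series, conjugate a matrix of $\Gamma_0(m^2N_Q)\cap\Gamma_1(m)$ through a dilation so that it lands in a group where a theta transformation formula is available, and check that the congruences on $a$, $d$, $c$ make the characteristic return to itself and the multiplier collapse to $\big(\tfrac{D_Q}{d}\big)$. But as written there is a genuine gap, and you name it yourself: the entire content of the theorem is the assertion that ``the surviving scalar collapses to the Kronecker symbol $(\tfrac{D_Q}{d})$'' and that ``the residual $r$-dependent phases disappear,'' and this is exactly the step you do not carry out. For a theta series with a genuinely non-integral characteristic $r=u/m$, the transformation law under a matrix of $\Gamma_0(N_Q)$ is \emph{not} simply $(\tfrac{D_Q}{d})$ times the same theta series; the Gauss-sum evaluation through the Bruhat factorization produces characteristic-dependent eighth roots of unity and quadratic exponential phases whose cancellation under the hypotheses $a\equiv d\equiv 1\pmod m$, $m^2N_Q\mid c$ is precisely what has to be verified. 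A proof that ends by declaring this verification to be ``the only genuinely substantial step'' has not proved the theorem.

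The paper closes this gap without any new Gauss-sum computation by a change of variables that keeps the characteristic \emph{integral}. Instead of $\Theta^{(u/m)}(m^2\tau)$ it works with Schoeneberg's $\theta(\tau;h,A,N,P)$ for $A=mA_Q$, whose level is $N=mN_Q$ (Proposition \ref{Proposition - level}), with $h=uN_Q$ and the dilation $\tau\mapsto m\tau$; note $Ah=mN_QA_Qu\equiv 0\pmod{mN_Q}$ holds automatically for every $u$, so the already-established $\Gamma_0(N)$-transformation formula (Proposition \ref{Proposition - transformation}) applies verbatim to $\gamma'=\big(\begin{smallmatrix}a&mb\\ c/m&d\end{smallmatrix}\big)\in\Gamma_0(mN_Q)$. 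The multiplier it produces is then disposed of in two lines: the character is $\big(\tfrac{m^nD_Q}{d}\big)=\big(\tfrac{D_Q}{d}\big)$ because $n$ is even and $\gcd(d,m)=1$; the phase is $e(abA_Q[u]/2)=e(abQ(u))=1$ because $A_Q[u]=2Q(u)\in 2\mathbb Z$; and the shifted characteristic $auN_Q$ is congruent to $uN_Q$ modulo $mN_Q$ because $a\equiv1\pmod m$. If you want to salvage your version with rational characteristics, you must either prove the needed transformation law for $\Theta^{(r)}$ in full (including the root-of-unity bookkeeping you deferred) or perform the analogous rescaling so that you can quote a formula that is actually stated for integral characteristics. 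Your cusp argument is fine once the transformation law is in place, and matches the paper's use of Proposition \ref{Proposition - cusp}.
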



\medskip

When $P(x) = 1$, we simply write $f_Q^{u; m}(\tau)$ instead of $f_{Q, 1}^{u; m}(\tau)$. If we put
\[ r_Q^{u; m}(k) \ := \ | \{ (x_1, \ldots, x_n) \in \mathbb Z^n \, | \, Q(x_1, \ldots, x_n) = k, \ x \equiv u \ (\bmod \ m) \} | \]
then $f_Q^{u; m}(\tau)$ is the generating function of the representation numbers $r_Q^{u; m}(k)$, i.e.
\[ f_Q^{u; m}(\tau) \ = \ \sum_{k = 0}^{\infty} r_Q^{u; m}(k) q^{k}. \]
The next theorem allows us to decompose the space $M_{n/2 + \nu}(\Gamma_0(m^2 N_Q) \cap \Gamma_1(m), (\frac{D_Q}{\cdot}))$ into $\chi$-eigenspaces for the congruence subgroup $\Gamma_0(m^2 N_Q)$.

\begin{theorem}\label{Theorem - decomposition}
Let $k, M, N$ be positive integers and $\psi$ a Dirichlet character modulo $MN$. Then we have
\[ M_{k}(\Gamma_0(MN) \cap \Gamma_1(M), \psi ) \ \cong \ \bigoplus_{\chi} M_{k}(\Gamma_0(MN), \chi\psi ) \]
where $\chi$ runs over all Dirichlet characters modulo $M$ such that $\chi(-1) = (-1)^k \psi (-1)$.
\end{theorem}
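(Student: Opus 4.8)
The plan is to view $M_k(\Gamma_0(MN)\cap\Gamma_1(M),\psi)$ as a representation of the finite abelian group $\Gamma_0(MN)/(\Gamma_0(MN)\cap\Gamma_1(M))$ and to decompose it into character eigenspaces.

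Write $\Gamma := \Gamma_0(MN)\cap\Gamma_1(M)$, and note that $\Gamma(MN)\subseteq\Gamma\subseteq\Gamma_0(MN)$, so $\Gamma$ is a congruence subgroup of level $MN$ and $M_k(\Gamma,\psi)$ is defined. First I would study the map $\rho\colon\Gamma_0(MN)\to(\mathbb Z/M\mathbb Z)^\times$ sending $\big(\begin{smallmatrix}a&b\\c&d\end{smallmatrix}\big)$ to $d\bmod M$: since $MN\mid c$ and $ad-bc=1$ one gets $\gcd(d,M)=1$ and $ad\equiv1\pmod M$, so $\rho$ is a well-defined group homomorphism whose kernel is exactly $\Gamma$. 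A short Chinese-Remainder argument (given $d_0$ coprime to $M$, choose $d\equiv d_0\pmod M$ with $\gcd(d,N)=1$, set $c=MN$, and solve $ad-bc=1$) shows $\rho$ is surjective, whence $G:=\Gamma_0(MN)/\Gamma\cong(\mathbb Z/M\mathbb Z)^\times$. I would also record that $\Gamma$ is normal in $\Gamma_0(MN)$ (because $\Gamma_1(M)\trianglelefteq\Gamma_0(M)\supseteq\Gamma_0(MN)$); combined with the cocycle identity $(f|_{k,\psi}\gamma)|_{k,\psi}\gamma'=f|_{k,\psi}(\gamma\gamma')$ quoted in the excerpt, this shows that $f\mapsto f|_{k,\psi}\gamma$ maps $M_k(\Gamma,\psi)$ into itself for each $\gamma\in\Gamma_0(MN)$ (holomorphy on $\mathbb H$ and at the cusps is unaffected, as $\gamma$ merely permutes cusps), and that, since $f|_{k,\psi}\delta=f$ for $\delta\in\Gamma=\ker\rho$ by the very definition of $M_k(\Gamma,\psi)$, the resulting action of $\Gamma_0(MN)$ factors through $G$.

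Next, because $G$ is a finite abelian group acting on a complex vector space, $M_k(\Gamma,\psi)$ splits as $\bigoplus_\chi V_\chi$ over the characters $\chi$ of $G$, i.e. over the Dirichlet characters $\chi$ modulo $M$, where $V_\chi=\{f: f|_{k,\psi}\gamma=\chi(d_\gamma)f \text{ for all } \gamma\in\Gamma_0(MN)\}$ and $d_\gamma$ is the lower-right entry of $\gamma$. The heart of the proof is the identification $V_\chi=M_k(\Gamma_0(MN),\chi\psi)$: unwinding the definition of $|_{k,\psi}$, the relation $f|_{k,\psi}\gamma=\chi(d_\gamma)f$ reads $(c\tau+d)^{-k}f(\gamma\tau)=(\chi\psi)(d)f(\tau)$, and multiplying through by $\overline{(\chi\psi)}(d)$ (legitimate since $\gcd(d,MN)=1$) converts it into $f|_{k,\chi\psi}\gamma=f$; conversely every $f\in M_k(\Gamma_0(MN),\chi\psi)$ lies in $V_\chi$ since $\chi(d_\gamma)=1$ whenever $\gamma\in\Gamma$. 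This already yields $M_k(\Gamma,\psi)\cong\bigoplus_{\chi\bmod M}M_k(\Gamma_0(MN),\chi\psi)$. Finally, to cut the index set down to the stated range I would apply the above to $\gamma=-I\in\Gamma_0(MN)$: on the one hand $f|_{k,\psi}(-I)=(-1)^k\psi(-1)f$ by direct computation, while on the other hand it equals $\chi(-1)f$ on $V_\chi$, so $V_\chi=0$ (equivalently $M_k(\Gamma_0(MN),\chi\psi)=0$) unless $\chi(-1)=(-1)^k\psi(-1)$, and the remaining summands may simply be discarded. I expect the only points requiring any care to be the surjectivity of $\rho$ and keeping a consistent convention for $\chi$ versus $\bar\chi$ (which in the end is immaterial, as $\chi\mapsto\bar\chi$ preserves the parity condition and permutes the summands); everything else is routine manipulation of slash operators and of representations of finite abelian groups.
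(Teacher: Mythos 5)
Your proposal is correct and is essentially the paper's own argument: the paper realizes the same decomposition by writing down the isotypic projectors $f_\chi = \frac{1}{\varphi(M)}\sum_{d}\bar\chi(d)\,f|_{k,\psi}\gamma_d$ explicitly, verifying $f_\chi|_{k,\psi}\gamma_{d'}=\chi(d')f_\chi$ and $\sum_\chi f_\chi=f$ by orthogonality of characters, and then discarding the summands killed by $-I$ via the parity condition $(\chi\psi)(-1)=(-1)^k$. Your representation-theoretic phrasing (action of $\Gamma_0(MN)/(\Gamma_0(MN)\cap\Gamma_1(M))\cong(\mathbb Z/M\mathbb Z)^\times$ and its character eigenspaces) is the same proof, with the minor bonus that you spell out the surjectivity and normality checks the paper leaves implicit.
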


\section{Proofs of Theorems \ref{Theorem - main} and \ref{Theorem - decomposition}}

We begin by summarizing several facts about theta functions associated with quadratic forms. The reader may refer to \cite[Chapter IX]{Sch} or \cite[Section 4.9]{Miy}.

\medskip

Let $A \in \mathrm{M}_n(\mathbb Z)$ ($n$ even) be a positive-definite matrix whose entries satisfy the following conditions:
\[ A_{ij} = A_{ji}, \qquad A_{ii} \equiv 0 \ (\bmod \ 2). \]
Such matrices are called even. Then the quadratic form
\[ Q(x) := \frac{1}{2} {^t}xAx = \sum_{1 \leq i \leq j \leq n} b_{ij} x_ix_j , \qquad x = {^t}(x_1 \ldots x_n) \]
is positive-definite and
\[ b_{ij} = A_{ij} \quad \mbox{for } i < j, \qquad b_{ii} = \frac{1}{2} A_{ii}. \]

Let $D = (-1)^{n/2} \det(A)$ be the discriminant of the quadratic form $Q$. For example, $Q(x) = ax_1^2 + bx_1x_2 + cx_2^2$ has discriminant $b^2 - 4ac$. The level of the quadratic form $Q$ is defined to be the smallest integer $N \in \mathbb N$ such that $NA^{-1}$ is an even matrix.

\begin{proposition}\label{Proposition - level}
The level of the quadratic form $Q$ is given as
\[ N = \frac{|D|}{\gcd(c_{ij}, \frac{c_{ii}}{2} \, | \, 1 \leq i, j \leq n)} \]
where $c_{ij}$ is the $(i, j)$ cofactor of $A$.
\end{proposition}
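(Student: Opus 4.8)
The plan is to unwind the definition of the level $N$ as the least positive integer making $N A^{-1}$ an even integral matrix, and to read off both the integrality and the evenness conditions coordinate-by-coordinate using the cofactor formula for the inverse. Recall that $A^{-1} = \frac{1}{\det A}\,\widetilde A$, where $\widetilde A$ is the adjugate matrix, whose $(i,j)$ entry is the $(j,i)$ cofactor $c_{ji}$; since $A$ is symmetric, $\widetilde A$ is symmetric as well, so $(\widetilde A)_{ij} = c_{ij}$. Thus $N A^{-1} = \frac{N}{\det A}\,\widetilde A$, and the matrix entries are $\frac{N c_{ij}}{\det A}$.

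First I would handle integrality: $N A^{-1} \in \mathrm{M}_n(\mathbb Z)$ is equivalent to $\det A \mid N c_{ij}$ for all $i,j$. Next I would incorporate the evenness requirement on the diagonal: we additionally need $\frac{N c_{ii}}{\det A}$ to be an even integer for each $i$, i.e. $2\det A \mid N c_{ii}$, equivalently $\det A \mid N \cdot \frac{c_{ii}}{2}$ — here one should first observe that $c_{ii}/2$ is an integer, which follows because the principal $(n-1)\times(n-1)$ minor defining $c_{ii}$ is itself the determinant of an even positive-definite matrix of even size $n-1$... wait, $n-1$ is odd, so instead one argues directly that $c_{ii}$ is even by a parity computation on the cofactor expansion of an even matrix, or one simply absorbs the factor of $2$ by writing the combined divisibility condition as $\det A \cdot 2 \mid N c_{ii}$. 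In any case, the upshot is that $N A^{-1}$ is an even integral matrix if and only if, writing $g := \gcd\!\big(c_{ij},\, \tfrac{c_{ii}}{2} \,\big|\, 1\le i,j\le n\big)$ (with the convention that the $\tfrac{c_{ii}}{2}$ entries contribute their $2$-adic content appropriately), we have $\det A \mid N g$. The smallest such $N$ is then $N = \frac{|\det A|}{\gcd(\det A,\, g)}$, and since each cofactor $c_{ij}$ is (up to sign) a minor of $A$ while $\det A$ itself appears among the natural multiples, one checks that $g \mid \det A$, so $\gcd(\det A, g) = g$ and $N = |\det A|/g = |D|/g$, which is the claimed formula.

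The main obstacle I anticipate is the careful bookkeeping of the factor of $2$ on the diagonal: one must verify that the $c_{ii}$ are even (so that $c_{ii}/2 \in \mathbb Z$ and the gcd makes sense) and that combining "$\det A \mid N c_{ij}$ off-diagonal" with "$2\det A \mid N c_{ii}$ on-diagonal" really does collapse into the single divisibility $\det A \mid N g$ with $g$ as written — in particular that no extra power of $2$ sneaks in or drops out. This is a routine but slightly delicate elementary-number-theory argument about the $2$-adic valuations of the cofactors of an even matrix; everything else is a direct translation of the definition through the adjugate formula. I would also invoke that $D = (-1)^{n/2}\det A$ has the same absolute value as $\det A$, so $|D|$ may replace $|\det A|$ freely in the final statement.
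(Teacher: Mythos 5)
The paper offers no proof of this proposition at all --- it simply cites Schoeneberg \cite[Theorem 1 in Chapter IX]{Sch} --- so your self-contained argument is a different (and more informative) route, and its skeleton is the standard correct one: pass through $A^{-1}=\frac{1}{\det A}\widetilde A$, note $(\widetilde A)_{ij}=c_{ij}$ by symmetry of $A$, and convert the entrywise conditions ``$\det A\mid Nc_{ij}$ for $i\neq j$ and $2\det A\mid Nc_{ii}$'' into the single divisibility $\det A\mid Ng$ with $g=\gcd(c_{ij},\tfrac{c_{ii}}{2})$, whence the minimal $N$ is $\det A/\gcd(\det A,g)$. However, the two points you flag as delicate are exactly the ones you do not actually establish, and both need real (if short) arguments. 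First, the evenness of $c_{ii}$: the principal minor obtained by deleting the $i$th row and column is a symmetric integer matrix of odd size $n-1$ with even diagonal; reduced mod $2$ it is an alternating matrix over $\mathbb F_2$, hence of even rank, hence singular, so $c_{ii}\equiv 0\pmod 2$. (Your first instinct, via an even-size even submatrix, was indeed wrong, and ``a parity computation on the cofactor expansion'' is not yet a proof.) Second, your justification that $g\mid\det A$ --- ``$\det A$ itself appears among the natural multiples'' --- is not an argument; the clean proof is the Laplace expansion $\det A=\sum_j A_{ij}c_{ij}$ along any row $i$, in which every term is divisible by $g$ because $g\mid c_{ij}$ for $i\neq j$ and $g\mid\tfrac{c_{ii}}{2}\mid c_{ii}$. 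With these two facts supplied, the equivalence $2\det A\mid Nc_{ii}\iff\det A\mid N\tfrac{c_{ii}}{2}$ is legitimate, the diagonal integrality condition is subsumed, $\gcd(\det A,g)=g$, and the formula $N=|D|/g$ follows; so your proposal is correct in outline but incomplete precisely at the steps you identified as the crux.
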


\begin{proof}
See \cite[Theorem 1 in Chapter IX]{Sch}.
\end{proof}

We further remark that $N$ and $D$ have the same prime divisors. Let $P(x)$ be a spherical function of degree $\nu$ with respect to $A$. It is a homogeneous polynomial of degree $\nu$ in variables $x_1, \ldots, x_n$ (see \cite[p.186]{Miy}) given as
\[
P(x) = \left\{ \begin{array}{ll}
\mbox{a constant} & \mbox{if $\nu = 0$}, \\
{^t}l A x \quad (l \in \mathbb C^n) & \mbox{if $\nu = 1$}, \\
\mbox{a linear combination of } ({^t}lAx)^\nu \quad (l \in \mathbb C^n, \ {^t}lAl = 0) & \mbox{if $\nu > 1$}.
\end{array} \right.
\]
Let $h \in \mathbb Z^n$ be a column vector satisfying $Ah \equiv 0 \pmod N$. We now define the theta function $\theta(\tau; h, A, N, P)$ by
\[ \theta(\tau; h, A, N, P) = \sum_{x \in \mathbb Z^n \atop x \equiv h \,\, (N)} P(x) e(A[x]\tau/2N^2) \]
where $A[x] = {^t x}Ax$ and $e(z) = e^{2\pi i z}$. Then the theta functions are holomorphic in $\mathbb H$ and 
\begin{eqnarray*}
& \theta(\tau; h_1, A, N, P) = \theta(\tau; h_2, A, N, P) & \mbox{if } h_1 \equiv h_2 \ (\bmod \ {N}), \\
& \theta(\tau; -h, A, N, P) = (-1)^\nu \theta(\tau; h, A, N, P). &
\end{eqnarray*}

\begin{proposition}\label{Proposition - cusp}
We have the transformation formulas
\begin{eqnarray*}
\theta(\tau + 1; h, A, N, P) & = & e(A[h]/2N^2) \theta(\tau; h, A, N, P), \\
\theta(-1/\tau; h, A, N, P) & = & (-i)^{n/2 + 2\nu} |D|^{-1/2} \tau^{n/2 + \nu} \sum_{l \in \mathbb Z^n / N\mathbb Z^n \atop A l \equiv 0 \,\, (N)} e({^t l}Ah/N^2) \theta(\tau; l, A, N, P).
\end{eqnarray*}
\end{proposition}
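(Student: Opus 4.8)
The plan is to establish the two formulas separately, the first being elementary and the second resting on Poisson summation applied to a Gaussian-type sum twisted by the spherical polynomial $P$.

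For the translation formula $\theta(\tau+1;h,A,N,P)=e(A[h]/2N^2)\,\theta(\tau;h,A,N,P)$, I would simply substitute $\tau+1$ for $\tau$ in the defining series. Each summand picks up a factor $e(A[x]/2N^2)$. Writing $x=h+Ny$ with $y\in\mathbb Z^n$ and expanding $A[x]=A[h]+2N\,{}^t h A y+N^2 A[y]$, one checks that $A[x]\equiv A[h]\pmod{2N^2}$: the cross term $2N\,{}^t h A y$ is divisible by $2N^2$ because $Ah\equiv 0\pmod N$, and the term $N^2A[y]$ is divisible by $2N^2$ because $A$ is even so $A[y]$ is even. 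Hence $e(A[x]/2N^2)=e(A[h]/2N^2)$ is constant over the summation range, and it factors out. This step is routine.

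For the inversion formula, the main tool is the $n$-dimensional Poisson summation formula. First I would reduce to the case $\nu\le 1$ in the usual way: a spherical polynomial of higher degree is a linear combination of $({}^t l A x)^\nu$ with ${}^t l A l=0$, and for such $l$ the harmonic exponential trick (replacing $P(x)q^{Q(x)}$-type sums by derivatives of Gaussians in an auxiliary variable, or directly invoking the classical transformation for $\sum P(x)e(\tau A[x]/2N^2)$) handles all $\nu$ uniformly once the scalar case is known; I expect to cite \cite[Section 4.9]{Miy} or \cite[Chapter IX]{Sch} for this harmonic-polynomial bookkeeping. Then, for fixed $h$ with $Ah\equiv 0\pmod N$, I would write the sum over $x\equiv h\pmod N$ as a sum over $x=h+Nz$, $z\in\mathbb Z^n$, and apply Poisson summation to the Schwartz function $z\mapsto P(h+Nz)\,e\!\big(A[h+Nz]\,\tau^{-1}/(-2N^2)\big)$ after substituting $\tau\mapsto -1/\tau$. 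The Fourier transform of a Gaussian $e(-A[w]/(2N^2\tau))$ (times a harmonic polynomial) is again a Gaussian (times the same harmonic polynomial), producing the factors $(-i)^{n/2+2\nu}$, $|D|^{-1/2}=\det(A)^{-1/2}$ up to the sign absorbed in $(-i)^{n/2}$, and $\tau^{n/2+\nu}$; the dual lattice of $N\mathbb Z^n$ under the bilinear form $A$ contributes the vectors $l$ with $Al\equiv 0\pmod N$, i.e. the summation set on the right-hand side, and the cross terms in completing the square in the exponent yield the phase $e({}^t l A h/N^2)$. Collecting these gives exactly the stated identity.

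The step I expect to be the genuine obstacle is the careful tracking of the constants and the identification of the dual summation range: one must verify that the Fourier-dual variable ranges precisely over $l\in\mathbb Z^n/N\mathbb Z^n$ with $Al\equiv 0\pmod N$ (not over a coset or a rescaled lattice), that the Gaussian integral $\int_{\mathbb R^n}e(-A[w]/(2N^2\tau))\,e({}^t w v)\,dw$ evaluates with the correct power of $i$ and the correct $\det(A)$ factor (this is where positive-definiteness and the branch of the square root enter, giving $(-i)^{n/2}$ rather than $i^{n/2}$), and that the polynomial factor $P$ transforms with $(-i)^{2\nu}=(-1)^\nu$ and produces the extra $\tau^\nu$. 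Since this is a standard computation recorded in the references, in the write-up I would state the scalar Gaussian Poisson summation as a lemma (or cite it) and then do the bookkeeping for the cross terms explicitly, which is the only part not entirely mechanical.
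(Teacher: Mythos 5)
The paper does not prove this proposition at all: it simply cites Schoeneberg, \emph{Elliptic Modular Functions}, Theorem 2 of Chapter IX. Your outline --- the elementary congruence $A[x]\equiv A[h]\pmod{2N^2}$ for the translation, and Poisson summation on $x=h+Nz$ with the Gaussian--harmonic Fourier transform, the identification of the dual range $\{l: Al\equiv 0\ (N)\}=NA^{-1}\mathbb Z^n$, and the phase $e({}^t lAh/N^2)$ from completing the square for the inversion --- is exactly the standard argument contained in that reference, and the points you flag as delicate (the branch of $\det(A)^{1/2}$, the power of $-i$, and the well-definedness of the phase modulo $N\mathbb Z^n$, which uses $Ah\equiv 0\ (N)$ again) are indeed the only places requiring care. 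Your proposal is correct and matches the cited source's approach.
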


\begin{proof}
We refer to \cite[Theorem 2 in Chapter IX]{Sch}.
\end{proof}

The theta functions behave in a relatively simple way under certain modular transformations.

\begin{proposition}\label{Proposition - transformation}
For any $\gamma = (\begin{smallmatrix} a & b \\ c & d \end{smallmatrix}) \in \Gamma_0(N)$ we have
\[
\theta(\gamma\tau; h, A, N, P) = \big( \frac{D}{d} \big) e(ab A[h]/2N^2) (c\tau + d)^{n/2 + \nu} \theta(\tau; ah, A, N, P).
\]
\end{proposition}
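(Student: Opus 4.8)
The plan is to reduce the general matrix $\gamma=\bigl(\begin{smallmatrix}a&b\\c&d\end{smallmatrix}\bigr)\in\Gamma_0(N)$ to a composition of the two generating-type transformations already recorded in Proposition~\ref{Proposition - cusp}, namely $\tau\mapsto\tau+1$ and $\tau\mapsto-1/\tau$, together with bookkeeping of how the parameter $h$ transforms. Concretely, I would write $\gamma$ as a word in $T=\bigl(\begin{smallmatrix}1&1\\0&1\end{smallmatrix}\bigr)$ and $S=\bigl(\begin{smallmatrix}0&-1\\1&0\end{smallmatrix}\bigr)$ and check that applying the formulas of Proposition~\ref{Proposition - cusp} repeatedly produces a transformation of the stated shape; but a cleaner route, and the one I would actually carry out, is to verify the identity directly for the two cases $c=0$ and $c\neq 0$.

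First, the case $c=0$: then $\gamma=\pm\bigl(\begin{smallmatrix}1&b\\0&1\end{smallmatrix}\bigr)$ with $a=d=\pm1$, and iterating the first formula of Proposition~\ref{Proposition - cusp} $b$ times gives $\theta(\tau+b;h,A,N,P)=e(bA[h]/2N^2)\,\theta(\tau;h,A,N,P)$. Since $ab\equiv b\pmod{2N^2}$ when $a=1$ (and the sign case is handled by $\theta(\tau;-h,\cdots)=(-1)^\nu\theta(\tau;h,\cdots)$ together with $(c\tau+d)^{n/2+\nu}=(-1)^{n/2+\nu}$ and $(\frac{D}{-1})=(-1)^{n/2}$), this matches the asserted formula with $\bigl(\frac{D}{d}\bigr)=1$ and $ah=h$. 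Second, and this is the heart of the argument, the case $c\neq 0$: here I would use the classical trick of writing $\gamma\tau$ via $S$ and a translation — specifically, for $c>0$ one has $\gamma\tau=\frac{a}{c}-\frac{1}{c^2(\tau+d/c)}$, which suggests substituting $\tau\mapsto \tau+d/c$, then applying the $S$-transformation, then another translation by $a/c$, and finally rescaling. Because $h$ satisfies $Ah\equiv0\pmod N$ and $c\equiv0\pmod N$, the finite sum over $l\in\mathbb Z^n/N\mathbb Z^n$ appearing in the $S$-formula collapses: the Gauss-type sum $\sum_{l}e({}^tlAh/N^2)\,e(\text{quadratic in }l)$ simplifies dramatically, and one expects that only $l$ in a single residue class (proportional to $a h$, by the congruence $ad\equiv1\pmod N$) survives, or that the full sum evaluates to a single theta function times an explicit root of unity and power of $|D|$. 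The cofactor description of $N$ in Proposition~\ref{Proposition - level}, together with $NA^{-1}$ being even, is what guarantees that the exponents $A[h]/2N^2$ and the cross-terms are the right kind of rational numbers for these manipulations to close up.

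The main obstacle I anticipate is precisely the evaluation of that finite exponential (Gauss) sum: one must show the sum over $l\in\mathbb Z^n/N\mathbb Z^n$ with $Al\equiv0\pmod N$ reduces, after the translation-and-inversion sandwich, to a single term giving $\theta(\tau;ah,A,N,P)$ with coefficient exactly $\bigl(\frac{D}{d}\bigr)e(abA[h]/2N^2)$. This requires (i) a careful tracking of the accumulated root of unity $(-i)^{n/2+2\nu}|D|^{-1/2}$ from the $S$-step against the quadratic Gauss sum $\sum_{y\bmod c}e(\text{stuff})$ that arises from the rescaling $\tau\mapsto c^2\tau$, and (ii) identifying the resulting normalized Gauss sum with the Kronecker symbol $\bigl(\frac{D}{d}\bigr)$ — this is a quadratic reciprocity computation in disguise, and it is where evenness of $A$ and the precise definition of $N_Q$ (all diagonal entries of $NA^{-1}$ even) are essential to avoid spurious factors of $2$ or sign ambiguities. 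I would organize the write-up by first disposing of $c=0$, then handling $\gamma$ with $c>0$ (the case $c<0$ following by composing with $-I$), and within the latter isolating the Gauss-sum evaluation as a self-contained computation, citing \cite[Chapter IX]{Sch} or \cite[Section 4.9]{Miy} for the reciprocity input if a clean reference is available, rather than reproving it from scratch.
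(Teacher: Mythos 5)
The first thing to note is that the paper does not prove this proposition at all: its ``proof'' is the single line ``See \cite[Theorem 5 in Chapter IX]{Sch}.'' So the relevant comparison is between your outline and the argument in Schoeneberg, and your outline is indeed a faithful reconstruction of that argument: dispose of $c=0$ by iterating the translation formula (your treatment of the sign case via $\theta(\tau;-h,\cdots)=(-1)^\nu\theta(\tau;h,\cdots)$, $(c\tau+d)^{n/2+\nu}=(-1)^{n/2+\nu}$ and $\bigl(\tfrac{D}{-1}\bigr)=(-1)^{n/2}$ is correct and complete); for $c\neq 0$, sandwich the inversion formula between translations, use $c\equiv 0\pmod N$ and $Ah\equiv 0\pmod N$ to make the sum over $l\in\mathbb Z^n/N\mathbb Z^n$ collapse, and identify the resulting normalized Gauss sum with $\bigl(\tfrac{D}{d}\bigr)$.

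The genuine gap is the one you yourself flag as ``the main obstacle I anticipate'': the evaluation of the quadratic Gauss sum and its identification with the Kronecker symbol $\bigl(\tfrac{D}{d}\bigr)$ is not carried out, and it is not a peripheral detail --- it is the entire content of the proposition, since the character $\bigl(\tfrac{D}{d}\bigr)$ has no other source in the computation. Everything else in your plan (the decomposition $\gamma\tau=\tfrac{a}{c}-\tfrac{1}{c(c\tau+d)}$, the splitting $x=cy+r$ of the lattice sum, the orthogonality that kills all but one residue class $l\equiv ah$) is standard bookkeeping; the reciprocity step is where positive-definiteness, the evenness of $A$, and the definition of the level actually enter, and where sign and $8$th-root-of-unity errors are easy to make. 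As written, your proposal is a correct strategy but not a proof. Two acceptable ways to close it: either carry out the Gauss-sum evaluation explicitly (via Milgram's formula or the one-variable quadratic reciprocity for Gauss sums applied after diagonalizing $A$ over $\mathbb Z_p$), or do what the paper does and cite \cite[Theorem 5 in Chapter IX]{Sch} for the whole statement --- citing it only ``for the reciprocity input'' while presenting the surrounding reduction as your own proof is an awkward halfway house, since the reduction without the Gauss sum proves nothing.
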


\begin{proof}
See \cite[Theorem 5 in Chapter IX]{Sch}.
\end{proof}

We are now ready to prove our theorems.

\begin{proof}[Proof of Theorem \ref{Theorem - main}]
Appealing to Proposition \ref{Proposition - level} we easily see that the level of $mA_Q$ is $m N_Q$, whence we may take $A = mA_Q$, $N = m N_Q$, $h = uN_Q$ in Proposition \ref{Proposition - transformation}. Since
\begin{eqnarray*}
\theta(m\tau; uN_Q, mA_Q, mN_Q, P) & = & \sum_{x \in \mathbb Z^n \atop x \equiv uN_Q \,\, (mN_Q)} P(x) e(A_Q[x]\tau/2N_Q^2) \\
& = & N_Q^\nu \sum_{x \in \mathbb Z^n \atop x \equiv u \,\, (m)} P(x) q^{Q(x)},
\end{eqnarray*}
we need to show that
\[ \theta(m\tau; uN_Q, mA_Q, mN_Q, P) \ \in \ M_{n/2 + \nu}(\Gamma_0(m^2 N_Q) \cap \Gamma_1(m), (\frac{D_Q}{\cdot})). \]
For every $\gamma = (\begin{smallmatrix} a & b \\ c & d \end{smallmatrix}) \in \Gamma_0(m^2 N_Q) \cap \Gamma_1(m)$, one can deduce by Proposition \ref{Proposition - transformation} that
\begin{eqnarray*}
& & \theta(m(\gamma \tau); uN_Q, mA_Q, mN_Q, P) \\
& = & \big( \frac{m^n D_Q}{d} \big) e(ab A_Q[u]/2) (c\tau + d)^{n/2 + \nu} \theta(m\tau; auN_Q, mA_Q, mN_Q, P) \\
& = & \big( \frac{D_Q}{d} \big) (c\tau + d)^{n/2 + \nu} \theta(m\tau; uN_Q, mA_Q, mN_Q, P)
\end{eqnarray*}
because $auN_Q \equiv uN_Q \pmod {mN_Q}$. Proposition \ref{Proposition - cusp} implies that the theta functions are holomorphic (respectively, have zeros if $\nu \geq 1$) at all cusps, whence the proof is complete.
\end{proof}

\begin{proof}[Proof of Theorem \ref{Theorem - decomposition}]
For $f \in M_{k}(\Gamma_0(MN) \cap \Gamma_1(M), \psi )$ and $\chi$ a Dirichlet character modulo $M$, we define
\[ f_\chi (\tau) = \frac{1}{\varphi(M)} \sum_{d \in (\mathbb Z / M\mathbb Z)^\times} \bar{\chi}(d) (f|_{k, \psi}\gamma_d)(\tau) \]
where $\gamma_d = (\begin{smallmatrix} * & * \\ * & d_0 \end{smallmatrix}) \in \Gamma_0(MN)$ is any fixed matrix with $d_0 \equiv d \pmod M$. We then have for any $\gamma_{d'} \in \Gamma_0(MN)$,
\begin{eqnarray*}
(f_\chi|_{k, \psi}\gamma_{d'})(\tau) & = & \frac{1}{\varphi(M)} \sum_{d \in (\mathbb Z / M\mathbb Z)^\times} \bar{\chi}(d) (f|_{k, \psi}\gamma_{dd'})(\tau) \\
& = & \chi(d') f_\chi(\tau),
\end{eqnarray*}
whence $f_\chi$ is contained in $M_k(\Gamma_0(MN), \chi \psi)$. One has
\begin{eqnarray*}
\sum_{\chi} f_\chi(\tau) & = & \frac{1}{\varphi(M)} \sum_{d \in (\mathbb Z / M\mathbb Z)^\times} (f|_{k, \psi}\gamma_d)(\tau) \sum_{\chi} \bar{\chi}(d) \\
& = & f(\tau)
\end{eqnarray*}
where the summation is taken over all Dirichlet characters $\chi$ modulo $M$. This is because
\[ \sum_{\chi} \bar{\chi}(d) = \left\{
\begin{array}{ll}
\varphi(M) & \mbox{if } d = 1, \\
0 & \mbox{otherwise.}
\end{array}
\right. \]
Observe that $M_k(\Gamma_0(MN), \chi_1 \psi) \cap M_k(\Gamma_0(MN), \chi_2 \psi) = \{ 0 \}$ if $\chi_1 \neq \chi_2$. We thus have the decomposition
\[ M_{k}(\Gamma_0(MN) \cap \Gamma_1(M), \psi ) \ \cong \ \bigoplus_{\chi} M_{k}(\Gamma_0(MN), \chi\psi ) \]
where $\chi$ runs over all Dirichlet characters modulo $M$. By definition one has
\[ M_k(\Gamma_0(MN), \chi \psi) = \{0\} \]
unless $(\chi \psi)(-1) = (-1)^k$. This completes the proof.
\end{proof}

\section{Examples}

In the case when the space $S_{n/2}(\Gamma_0(m^2 N_Q) \cap \Gamma_1(m), (\frac{D_Q}{\cdot}))$ of cusp forms is trivial, we can get an explicit formula for the number of representations of integers by the quadratic form $Q(x)$ with congruence condition $x \equiv u \pmod m$ on the variables. This is because the space $M_{n/2}(\Gamma_0(m^2 N_Q) \cap \Gamma_1(m), (\frac{D_Q}{\cdot}))$ is then spanned by Eisenstein series and their Fourier coefficients are explicitly known (see Theorem \ref{Theorem - decomposition} and the two theorems below).

\medskip

Let $A_{N, 1}$ be the set of triples $(\{\psi, \varphi \}, t)$ such that $\psi$ and $\varphi$, taken as an unordered pair, are primitive Dirichlet characters satisfying $(\psi \varphi)(-1) = -1$, and $t$ is a positive integer such that $tuv | N$, where $u$ and $v$ denote the conductors of $\psi$ and $\varphi$, respectively. For $(\{\psi, \varphi \}, t) \in A_{N, 1}$ let
\[ E_{1}^{\psi, \varphi, t}(\tau) \, = \, \frac{1}{2}\big( \delta(\varphi) L(0, \psi) + \delta(\psi) L(0, \varphi) \big) + \sum_{n = 1}^\infty \sigma_0^{\psi, \varphi} (n) q^{tn}, \]
where $\delta(\psi)$ is $1$ if $\psi$ is the trivial character and is $0$ otherwise, and
\[ \sigma_{0}^{\psi, \varphi}(n) \, = \, \sum_{d|n} \psi(n/d) \varphi(d). \]

\begin{theorem}\cite[Theorem 4.8.1]{DS}
For any Dirichlet character $\chi$ modulo $N$, the set
\[ \{ E_{1}^{\psi, \varphi, t} \, | \, (\{ \psi, \varphi \}, t) \in A_{N, 1}, \ \psi \varphi = \chi \} \]
represents a basis for the space $E_1(\Gamma_0(N), \chi)$ of Eisenstein series of weight one with nebentypus $\chi$.
\end{theorem}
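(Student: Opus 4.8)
The plan is to construct the functions $E_1^{\psi,\varphi,t}$ by hand and then prove linear independence and spanning separately, taking as a model the (technically easier) case of weight $k\geq 3$. In that case one defines, for primitive characters $\psi\bmod u$ and $\varphi\bmod v$ with $(\psi\varphi)(-1)=(-1)^k$ and $tuv\mid N$, the absolutely convergent series $\sum_{(c,d)}\psi(c)\varphi(d)(c\tau+d)^{-k}$ (over the appropriate coset), checks directly that it lies in $E_k(\Gamma_0(uv),\psi\varphi)$ with the asserted $q$-expansion, twists by $\tau\mapsto t\tau$ to land in $\Gamma_0(N)$, and then proves the basis statement by matching linear independence with a dimension count. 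In weight $1$ the crucial new difficulty is that this double series is not absolutely convergent, so the first and genuinely hard step is to define $E_1^{\psi,\varphi}$ at all. I would do this via Hecke's trick: introduce an auxiliary complex parameter $s$ and consider the non-holomorphic series $E_1^{\psi,\varphi}(\tau,s)=\sum_{(c,d)}\psi(c)\varphi(d)\,\mathrm{Im}(\tau)^s|c\tau+d|^{-2s}(c\tau+d)^{-1}$, which converges absolutely for $\mathrm{Re}(s)>1/2$ and transforms as a weight-$1$ form of nebentypus $\chi=\psi\varphi$ for $\Gamma_0(uv)$ for every such $s$.

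The second step is the analytic continuation of $E_1^{\psi,\varphi}(\tau,s)$ to a neighborhood of $s=0$ together with the proof that its value there is holomorphic in $\tau$. I would compute the Fourier expansion of $E_1^{\psi,\varphi}(\tau,s)$ — a constant term assembled from $L(2s,\cdot)$ and $L(2s+1,\cdot)$, plus terms $\sum_{n\geq 1}\sigma_{2s}^{\psi,\varphi}(n)\,W_s(ny)\,e(nx)$ with an incomplete-gamma Whittaker factor $W_s$ — and use the functional equations of Dirichlet $L$-functions to continue it meromorphically. The parity hypothesis $(\psi\varphi)(-1)=-1$ is exactly what forces, at $s=0$, the Whittaker factors to degenerate to $1$, the potential pole to be absent, and the constant term to collapse to $\tfrac12\big(\delta(\varphi)L(0,\psi)+\delta(\psi)L(0,\varphi)\big)$; one then reads off the holomorphic form $E_1^{\psi,\varphi}:=E_1^{\psi,\varphi}(\cdot,0)$ and sets $E_1^{\psi,\varphi,t}(\tau)=E_1^{\psi,\varphi}(t\tau)$. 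Modularity for $\Gamma_0(uv)$, hence for $\Gamma_0(N)$ once $tuv\mid N$, persists under the specialization $s\to 0$, and holomorphy at the cusps follows by scaling each cusp to $\infty$ and inspecting the finitely many (mostly vanishing, all finite) constant terms; thus $E_1^{\psi,\varphi,t}\in E_1(\Gamma_0(N),\chi)$.

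The third step is linear independence. Suppose $\sum c_{\psi,\varphi,t}\,E_1^{\psi,\varphi,t}=0$, the sum over triples with $\psi\varphi=\chi$. Passing to Mellin transforms (the constant terms being dealt with separately) turns this into an identity $\sum_{\{\psi,\varphi\}}\big(\sum_t c_{\psi,\varphi,t}\,t^{-s}\big)L(s,\psi)L(s,\varphi)=0$ of Dirichlet series. Comparing Euler factors at a prime $p\nmid N$, namely $\big[(1-\psi(p)p^{-s})(1-\varphi(p)p^{-s})\big]^{-1}$, recovers the multiset $\{\psi(p),\varphi(p)\}$; since a Dirichlet character is determined by its values on primes, two pairs with the same product $\chi$ agreeing at all such $p$ must coincide, so distinct unordered pairs contribute independently, and then each finite Dirichlet polynomial $\sum_t c_{\psi,\varphi,t}t^{-s}$ must vanish, giving $c_{\psi,\varphi,t}=0$. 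Finally, for spanning it suffices to verify that the number of admissible triples with $\psi\varphi=\chi$ equals $\dim E_1(\Gamma_0(N),\chi)$; summed over $\chi$ this becomes the statement that the total count equals $\dim\mathcal{E}_1(\Gamma_1(N))$, which one checks against the dimension formula for the weight-$1$ Eisenstein subspace, the point being that this dimension counts half of the regular cusps of $\Gamma_1(N)$ and the resulting factor $\tfrac12$ corresponds precisely to passing from ordered to unordered pairs $\{\psi,\varphi\}$.

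I expect the main obstacle to be exactly the analytic continuation to $s=0$ and the verification that the limit is holomorphic — the core of Hecke's trick, and the step where the hypothesis $(\psi\varphi)(-1)=-1$ is indispensable; a secondary delicate point is getting the weight-$1$ dimension and cusp count exactly right because of the regular/irregular cusp dichotomy that is special to odd weight. Everything else — the Euler-product bookkeeping, the twisting by $t$, holomorphy at the cusps — is routine.
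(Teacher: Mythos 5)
The paper does not prove this statement at all: it is imported verbatim, with the citation \cite[Theorem 4.8.1]{DS}, so there is no internal argument to compare against. Your outline essentially reconstructs the proof given in the cited source: Diamond--Shurman also define the weight-one series by Hecke's trick (an auxiliary parameter $s$, absolute convergence for $\mathrm{Re}(s)$ large, Fourier expansion with incomplete-gamma factors, analytic continuation and specialization at the critical point, with the parity condition $(\psi\varphi)(-1)=-1$ ensuring the limit is holomorphic), then twist by $\tau\mapsto t\tau$ and match the count of triples against the dimension of the weight-one Eisenstein space, which is half the number of regular cusps of $\Gamma_1(N)$ --- exactly the factor of $\tfrac12$ you attribute to passing to unordered pairs $\{\psi,\varphi\}$. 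The one place you genuinely depart from the source is linear independence: DS obtain it by exhibiting an explicit invertible change of basis (via Gauss sums) between the $E_1^{\psi,\varphi,t}$ and Eisenstein series parametrized by the regular cusps, whereas you argue through Mellin transforms and Euler factors. Your route is workable, but the step ``distinct unordered pairs contribute independently'' deserves care: you must rule out a nontrivial relation among the finitely many products $L(s,\psi)L(s,\varphi)$ with Dirichlet-polynomial coefficients in $t^{-s}$, which requires comparing Euler factors at infinitely many primes $p\nmid N$ and invoking that a character is determined by its values on primes in any set of Dirichlet density one --- a point worth writing out, since two distinct unordered pairs can agree at any given single prime. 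As a sketch, what you propose is sound and identifies the correct hard steps (the continuation to the holomorphic limit and the regular/irregular cusp count peculiar to odd weight).
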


Hereafter $\chi_n(\cdot)$ denotes the quadratic character $\big( \frac{n}{\cdot} \big)$ for $n \in \mathbb Z$.

\begin{example}
For $Q(x) = x_1^2 + x_2^2$ and $u_i \in \{0, 1, 2 \}$, let
\[ f_Q^{u_1, u_2; 3}(\tau) = \sum_{x_i \equiv u_i \,\, (3)} q^{x_1^2 + x_2^2} = \sum_{n = 0}^{\infty} r_Q^{u_1, u_2; 3}(n) q^n. \]
Appealing to Theorems \ref{Theorem - main} and \ref{Theorem - decomposition} we see that $f_Q^{u_1, u_2; 3}(\tau)$ is contained in $M_{1}(\Gamma_0(36), \chi_{-4})$. This space has no cusp forms other than $0$, whence we see by the preceding theorem that $M_{1}(\Gamma_0(36), \chi_{-4})$ is decomposed as
\[ M_{1}(\Gamma_0(36), \chi_{-4}) = \mathbb C E_1^{1, \chi_{-4}, 1} \oplus \mathbb C E_1^{1, \chi_{-4}, 3} \oplus \mathbb C E_1^{1, \chi_{-4}, 9} \oplus \mathbb C E_1^{\chi_{12}, \chi_{-3}, 1}. \]
Their Fourier expansions are explicitly given as
\begin{eqnarray*}
E_1^{1, \chi_{-4}, 1}(\tau) & = & \frac{1}{4} + \sum_{n=1}^\infty \big( \sum_{d|n} \big( \frac{-4}{d} \big) \big) q^n, \\
E_1^{\chi_{12}, \chi_{-3}, 1}(\tau) & = & \sum_{n=1}^\infty \big( \sum_{d|n} \big(\frac{12}{n/d}\big)\big(\frac{-3}{d}\big) \big) q^n.
\end{eqnarray*}
Calculating sufficiently many $r_Q^{u_1, u_2; 3}(n)$, we can express $f_Q^{u_1, u_2; 3}(\tau)$ as a linear combination of the above Eisenstein series. In terms of the identity
\[ \sum_{d|n} \big(\frac{12}{n/d}\big)\big(\frac{-3}{d}\big) \ = \ \big( \frac{3}{n} \big) \sum_{d|n \atop \frac{n}{d} \, \mathrm{odd}} \big( \frac{-1}{d} \big) \]
we obtain for every $n \in \mathbb N$
\begin{eqnarray*}
r_Q^{1, 0; 3}(n) & = & \frac{1}{2} \sum_{d|n} \big( \frac{-4}{d} \big) - \frac{1}{2} \sum_{d|\frac{n}{9}} \big( \frac{-4}{d} \big) + \frac{1}{2} \big( \frac{3}{n} \big) \sum_{d|n \atop \frac{n}{d} \, \mathrm{odd}} \big( \frac{-1}{d} \big), \\
r_Q^{1, 1; 3}(n) & = & \frac{1}{2} \sum_{d|n} \big( \frac{-4}{d} \big) - \frac{1}{2} \sum_{d|\frac{n}{9}} \big( \frac{-4}{d} \big) - \frac{1}{2} \big( \frac{3}{n} \big) \sum_{d|n \atop \frac{n}{d} \, \mathrm{odd}} \big( \frac{-1}{d} \big).
\end{eqnarray*}
\end{example}

\begin{example}
The following results are easily obtained by the same method as above, whence we merely present their formulas without detailed explanation.

(1) If $Q(x) = x_1^2 + x_2^2$, then one has for any $n \in \mathbb N$
\begin{eqnarray*}
r_Q^{1, 0; 2}(n) & = & 2 \sum_{d|n} \big( \frac{-4}{d} \big) - 2 \sum_{d|\frac{n}{2}} \big( \frac{-4}{d} \big), \\
r_Q^{1, 1; 2}(n) & = & 4 \sum_{d|\frac{n}{2}} \big( \frac{-4}{d} \big) - 4 \sum_{d|\frac{n}{4}} \big( \frac{-4}{d} \big).
\end{eqnarray*}

(2) For $Q(x) = x_1^2 + x_1x_2 + x_2^2$, we see $r_Q^{u_1, u_2; 2}(n) = r_Q^{u_2, u_1; 2}(n) = r_Q^{u_1+u_2, -u_2; 2}(n)$ and $r_Q^{u_1, u_2; 3}(n) = r_Q^{u_2, u_1; 3}(n) = r_Q^{-u_1, -u_2; 3}(n) = r_Q^{u_1+u_2, -u_2; 3}(n)$. One has for every $n \in \mathbb N$
\begin{eqnarray*}
r_Q^{1, 0; 2}(n) & = & 2 \sum_{d|n} \big( \frac{-3}{d} \big) - 2 \sum_{d|\frac{n}{4}} \big( \frac{-3}{d} \big), \\
r_Q^{1, 0; 3}(n) & = & \sum_{d|n} \big( \frac{-3}{d} \big) - \sum_{d|\frac{n}{3}} \big( \frac{-3}{d} \big), \\
r_Q^{1, 1; 3}(n) & = & 3 \sum_{d|\frac{n}{3}} \big( \frac{-3}{d} \big) - 3 \sum_{d|\frac{n}{9}} \big( \frac{-3}{d} \big).
\end{eqnarray*}

(3) Given $Q(x) = x_1^2 + 2x_2^2$, one has for any $n \in \mathbb N$
\begin{eqnarray*}
r_Q^{1, 0; 2}(n) & = & \sum_{d|n} \big(\frac{-8}{d}\big) - \sum_{d|\frac{n}{2}} \big(\frac{-8}{d}\big) + \sum_{d|n} \big(\frac{-4}{n/d}\big) \big(\frac{8}{d}\big), \\
r_Q^{0, 1; 2}(n) & = & 2\sum_{d|\frac{n}{2}} \big(\frac{-8}{d}\big) - 2\sum_{d|\frac{n}{4}} \big(\frac{-8}{d}\big), \\
r_Q^{1, 1; 2}(n) & = & \sum_{d|n} \big(\frac{-8}{d}\big) - \sum_{d|\frac{n}{2}} \big(\frac{-8}{d}\big) - \sum_{d|n} \big(\frac{-4}{n/d}\big) \big(\frac{8}{d}\big).
\end{eqnarray*}

(4) When $Q(x) = x_1^2 + x_1x_2 + 2x_2^2$ we see $r_Q^{u_1, u_2; 2}(n) = r_Q^{u_1+u_2, -u_2; 2}(n)$ and obtain for any $n \in \mathbb N$
\begin{eqnarray*}
r_Q^{1, 0; 2}(n) & = & 2 \sum_{d|n} \big( \frac{-7}{d} \big) - 4 \sum_{d|\frac{n}{2}} \big( \frac{-7}{d} \big) + 2 \sum_{d|\frac{n}{4}} \big(\frac{-7}{d}\big), \\
r_Q^{0, 1; 2}(n) & = & 2 \sum_{d|\frac{n}{2}} \big( \frac{-7}{d} \big) - 2 \sum_{d|\frac{n}{4}} \big(\frac{-7}{d}\big).
\end{eqnarray*}
\end{example}

\medskip

We are now going to deal with quadratic forms of four variables. Let $A_{N, 2}$ be the set of triples $(\psi, \varphi, t)$ such that $\psi$ and $\varphi$, taken this time as an ordered pair, are primitive Dirichlet characters satisfying $(\psi \varphi)(-1) = 1$, and $t$ is an integer such that $1 < tuv | N$, where $u$ and $v$ denote the conductors of $\psi$ and $\varphi$, respectively. For any triple $(\psi, \varphi, t) \in A_{N, 2}$ we define
\[ E_{2}^{\psi, \varphi, t}(\tau) \, = \, \left\{ \begin{array}{ll}
\sum_{n=1}^{\infty} \sigma(n) q^n - t \sum_{n=1}^{\infty} \sigma(n) q^{tn} & \mbox{if } \psi = \varphi = 1 \\
\frac{1}{2}\delta(\psi) L(-1, \varphi) + \sum_{n = 1}^\infty \sigma_1^{\psi, \varphi} (n) q^{tn} & \mbox{otherwise}.
\end{array} \right. \]
Here $\delta(\psi)$ has the same meaning as before, and
\[ \sigma_{1}^{\psi, \varphi}(n) \, = \, \sum_{d|n} \psi(n/d) \varphi(d) d. \]


\begin{theorem}\cite[Theorem 4.6.2]{DS}
For any Dirichlet character $\chi$ modulo $N$, the set
\[ \{ E_{2}^{\psi, \varphi, t} \, | \, ( \psi, \varphi, t) \in A_{N, 2}, \ \psi \varphi = \chi \} \]
represents a basis for the space $E_2(\Gamma_0(N), \chi)$ of Eisenstein series of weight two with nebentypus $\chi$.
\end{theorem}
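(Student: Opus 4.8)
This is the classical description of the weight-two Eisenstein subspace, proved in \cite[Theorem 4.6.2]{DS}; I sketch the route I would follow. The one genuine subtlety is that the naive weight-two Eisenstein series does not converge absolutely, so the plan is to isolate exactly where this matters. For a pair of primitive Dirichlet characters $(\psi,\varphi)$ with $(\psi\varphi)(-1) = 1$ that are \emph{not both trivial}, I would define the Eisenstein series through Hecke's trick, as the value at $s = 0$ of the analytically continued series $\sum_{(c,d)} \psi(c)\varphi(d)(c\tau+d)^{-2}|c\tau+d|^{-2s}$ (suitably normalized), and then compute its Fourier expansion by the Lipschitz summation formula, obtaining $\frac{1}{2}\delta(\psi)L(-1,\varphi) + \sum_{n\ge 1}\sigma_1^{\psi,\varphi}(n)q^n$. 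The key observation is that the non-holomorphic term which Hecke's trick introduces is a constant multiple of $\big(\sum_c \psi(c)\big)\big(\sum_d \varphi(d)\big)$, hence vanishes under the standing hypothesis; so $E_2^{\psi,\varphi}$ is already a holomorphic modular form of weight two on $\Gamma_0(uv)$ with nebentypus $\psi\varphi$, where $u$ and $v$ are the conductors. Replacing $\tau$ by $t\tau$ for each $t$ with $tuv \mid N$ keeps it holomorphic and raises the level to $N$, producing the forms $E_2^{\psi,\varphi,t} \in E_2(\Gamma_0(N),\chi)$ with $\chi = \psi\varphi$.

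The remaining case is $\psi = \varphi = 1$. Here the weight-two series $E_2$ is only quasimodular, but for every $t \mid N$ with $t > 1$ the difference $E_2(\tau) - tE_2(t\tau)$ is a genuine holomorphic form on $\Gamma_0(t) \supseteq \Gamma_0(N)$ with trivial character, because the non-holomorphic completions of $E_2(\tau)$ and of $tE_2(t\tau)$ coincide and cancel in the difference. This yields the forms $E_2^{1,1,t}$ and explains why one must exclude $t = 1$ in precisely this case — that is, why the defining condition is $1 < tuv$.

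With all the building blocks in hand, I would finish by proving linear independence and then matching dimensions. For independence: for a fixed pair $(\psi,\varphi)$ the shifts by distinct $t$ are independent because $E_2^{\psi,\varphi}(t\tau)$ has its positive Fourier coefficients supported on multiples of $t$, so an echelon argument on $q$-expansions separates them; across distinct ordered pairs the associated Dirichlet series factor as $L(s,\psi)L(s-1,\varphi)$ and are pairwise distinct, equivalently the forms carry distinct Hecke eigenvalue systems at primes away from the level. Spanning then reduces to checking that the number of admissible triples with $\psi\varphi = \chi$ equals $\dim_{\mathbb C} E_2(\Gamma_0(N),\chi)$ — which is the number of cusps of $\Gamma_0(N)$ minus one when $\chi$ is trivial, and is given by the standard Eisenstein-dimension formula otherwise — the identification being a bookkeeping of the divisors $tuv \mid N$ against the cusps of $\Gamma_0(N)$. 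I expect this dimension-matching, together with the verification that the Hecke-trick correction term vanishes for every pair except $(1,1)$, to be the crux; once it is done the theorem follows by comparing the dimensions of the two sides.
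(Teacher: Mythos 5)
The paper offers no proof of this statement: it is quoted directly from Diamond--Shurman and justified only by the citation \cite[Theorem 4.6.2]{DS}. Your sketch is a faithful outline of the argument given in that source --- Hecke's trick with the observation that the non-holomorphic correction vanishes unless $(\psi,\varphi)=(1,1)$, the differences $E_2(\tau)-tE_2(t\tau)$ in the trivial case (which is exactly why the condition $1<tuv$ appears), and then linear independence via $q$-expansions and eigenvalue systems combined with a dimension count against the cusps of $\Gamma_0(N)$ --- so it is consistent with the paper's treatment.
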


\begin{remark}
We will not deal with an example of weight greater than $2$, whence we merely refer the reader to \cite[Theorem 4.5.2]{DS} for the basis of $E_k(\Gamma_0(N), \chi)$ with $k \geq 3$.
\end{remark}

\begin{example}
If we let $Q(x) = x_1^2 + x_2^2 + x_3^2 + x_4^2$, then $f_Q^{u; 2}(\tau)$ is a modular form in $M_2(\Gamma_0(16))$ by Theorem \ref{Theorem - main}. Appealing to the theory of modular forms we see $S_2(\Gamma_0(16)) = \{ 0 \}$, whence $M_2(\Gamma_0(16))$ is spanned by Eisenstein series as follows:
\[ M_{2}(\Gamma_0(16)) = \big(\bigoplus_{1 < t | 16} \mathbb C E_2^{1, 1, t} \big) \oplus \mathbb C E_2^{\chi_{-4}, \chi_{-4}, 1}. \]
In terms of the identity
\[ \sigma_{1}^{\chi_{-4}, \chi_{-4}}(n) \, = \, \big( \frac{-4}{n} \big) \sigma(n), \]
we obtain for every $n \in \mathbb N$

\begin{eqnarray*}
r_{1, 0, 0, 0; 2}(n) & = & \big(1 + \big(\frac{-4}{n}\big)\big) \sigma(n) -3\sigma(n/2) + 2\sigma(n/4), \\
r_{1, 1, 0, 0; 2}(n) & = & 4\sigma(n/2) - 12\sigma(n/4) + 8\sigma(n/8), \\
r_{1, 1, 1, 0; 2}(n) & = & \big(1 - \big(\frac{-4}{n}\big)\big) \sigma(n) -3\sigma(n/2) + 2\sigma(n/4), \\
r_{1, 1, 1, 1; 2}(n) & = & 16\sigma(n/4) - 48\sigma(n/8) + 32\sigma(n/16).
\end{eqnarray*}
\end{example}

\begin{example}
By a routine computation as above we can obtain the following results.

(1) The modular form $f_Q^{u; 2}(\tau)$ with $Q(x) = x_1^2 + x_2^2 + x_3^2 + 2x_4^2$ is contained in
\[ M_2(\Gamma_0(32), \chi_2) = \big( \bigoplus_{a = 1, 2, 4} \mathbb C E_2^{1, \chi_2, a} \big) \oplus \big( \bigoplus_{b = 1, 2, 4} \mathbb C E_2^{\chi_2, 1, b} \big) \oplus \mathbb C E_2^{\chi_{-2}, \chi_{-4}, 1} \oplus \mathbb C E_2^{\chi_{-4}, \chi_{-2}, 1}. \]
We thus obtain for every $n \in \mathbb N$
\begin{eqnarray*}
r_Q^{1, 0, 0, 0; 2}(n) & = & \sum_{d|n} \big( \frac{2}{n/d} \big)d - 2 \sum_{d|\frac{n}{2}} \big( \frac{2}{n/2d} \big)d +\big( \frac{-2}{n} \big) \sum_{d|n} \big( \frac{2}{d} \big) d, \\
r_Q^{1, 1, 0, 0; 2}(n) & = & 4 \sum_{d|\frac{n}{2}} \big( \frac{2}{n/2d} \big)d -8 \sum_{d|\frac{n}{4}} \big( \frac{-2}{n/4d} \big) d, \\
r_Q^{1, 1, 1, 0; 2}(n) & = & - \big( 1 + \big( \frac{-2}{n} \big) \big) \sum_{d|n} \big( \frac{2}{d} \big)d +\sum_{d|\frac{n}{2}} \big( \frac{2}{d} \big)d + \big( 1 + \big( \frac{-2}{n} \big) \big) \sum_{d|n} \big( \frac{2}{n/d} \big)d - 2\sum_{d|\frac{n}{2}} \big( \frac{2}{n/2d} \big)d, \\
r_Q^{0, 0, 0, 1; 2}(n) & = & -2 \sum_{d|\frac{n}{2}} \big( \frac{2}{d} \big)d + 2\sum_{d|\frac{n}{4}} \big( \frac{2}{d} \big)d + 4\sum_{d|\frac{n}{2}} \big( \frac{2}{n/2d} \big)d - 8\sum_{d|\frac{n}{4}} \big( \frac{2}{n/4d} \big)d, \\
r_Q^{1, 0, 0, 1; 2}(n) & = & \sum_{d|n} \big( \frac{2}{n/d} \big)d - 2\sum_{d|\frac{n}{2}} \big( \frac{2}{n/2d} \big)d - \big( \frac{-2}{n} \big) \sum_{d|n} \big( \frac{2}{d} \big) d, \\
r_Q^{1, 1, 0, 1; 2}(n) & = & 8\sum_{d|\frac{n}{4}} \big( \frac{2}{n/4d} \big)d, \\
r_Q^{1, 1, 1, 1; 2}(n) & = & -\big( 1 - \big( \frac{-2}{n} \big) \big) \sum_{d|n} \big( \frac{2}{d} \big)d +\sum_{d|\frac{n}{2}} \big( \frac{2}{d} \big)d + \big( 1 - \big( \frac{-2}{n} \big) \big) \sum_{d|n} \big( \frac{2}{n/d} \big)d - 2\sum_{d|\frac{n}{2}} \big( \frac{2}{n/2d} \big)d.
\end{eqnarray*}

(2) Let $Q(x) = x_1^2 + x_1x_2 + x_2^2 + x_3^2 + x_3x_4 + x_4^2$. We see
\[ r_Q^{u_1, u_2, u_3, u_4; 2}(n) = r_Q^{u_2, u_1, u_3, u_4; 2}(n) = r_Q^{u_3, u_4, u_1, u_2; 2}(n) = r_Q^{u_1 + u_2, -u_2, u_3, u_4; 2}(n) \]
and obtain for every $n \in \mathbb N$
\begin{eqnarray*}
r_Q^{1, 0, 0, 0; 2}(n) & = & 2\sigma(n) -6\sigma(n/2) - 6\sigma(n/3) + 4\sigma(n/4)+18\sigma(n/6)-12\sigma(n/12), \\
r_Q^{1, 0, 1, 0; 2}(n) & = & 4\sigma(n/2) - 4\sigma(n/4) - 12\sigma(n/6) + 12 \sigma(n/12).
\end{eqnarray*}

(3) If $Q(x) = x_1^2 + x_2^2 + x_3^2 + x_4^2 + x_1x_2 + x_1x_3 + x_1x_4$, then a computation shows that $r_Q^{u; 2}(n)$ with $u = (u_1, \ldots, u_4) \in \{0, 1\}^4$ and $n \in \mathbb N$ is given as
\[ \left\{ \begin{array}{ll}
8\sigma(n/2) - 24\sigma(n/4) + 16\sigma(n/8) & \mbox{if $u_1 = 0$ and exactly one of $u_2, u_3, u_4$ is $0$} \\
24 \sigma(n/4) - 48 \sigma(n/8) & \mbox{if $u_1 = u_2 = u_3 = u_4 = 0$} \\
2\sigma(n) - 6\sigma(n/2) + 4\sigma(n/4) & \mbox{otherwise.}
\end{array}\right. \]
\end{example}

\begin{example}
We deal with $Q(x) = x_1^2 + x_2^2 + x_3^2 + x_4^2$. The modular form $f_Q^{u; 3}(\tau)$ is then contained in $M_{2}(\Gamma_0(36))$ by Theorems \ref{Theorem - main} and \ref{Theorem - decomposition}. Unlike the previous examples, it turns out that this space contains a cusp form $\eta(6\tau)^4$, where $\eta(\tau)$ denotes the Dedekind eta function defined by
\[ \eta(\tau) \, = \, q^{1/24} \prod_{n=1}^\infty (1 - q^n). \]
In fact, we have
\[ M_{2}(\Gamma_0(36)) = \big( \bigoplus_{1< a |36} \mathbb C E_2^{1, 1, a} \big) \oplus \big( \bigoplus_{b = 1, 2, 4} \mathbb C E_2^{\chi_{-3}, \chi_{-3}, b} \big) \oplus \mathbb C \eta(6\tau)^4. \]
If we denote by $c(n)$ the $n$th coefficient of the cusp form $\eta(6\tau)^4$, i.e.
\[ \eta(6\tau)^4 = \sum_{n=1}^\infty c(n) q^n = q - 4q^7 + 2q^{13} + 8q^{19} - 5q^{25}  - 4q^{31} - 10 q^{37} + \cdots, \]
then we deduce for every $n \in \mathbb N$
\begin{eqnarray*}
r_Q^{1, 0, 0, 0; 3}(n) & = & \frac{1}{6}\big(1 + \big(\frac{-3}{n}\big)\big) \sigma(n) - \frac{2}{3} \sigma(n/3) - \frac{2}{3} \big(1 + \big(\frac{-3}{n/4}\big)\big) \sigma(n/4) \\
& & + \, \frac{1}{2} \sigma(n/9) + \frac{8}{3} \sigma(n/12) - 2 \sigma(n/36) + \frac{2}{3} c(n), \\
r_Q^{1, 1, 0, 0; 3}(n) & = & \frac{1}{6}\big(1 - \big(\frac{-3}{n}\big)\big) \sigma(n) - \frac{2}{3} \sigma(n/3) - \frac{2}{3} \big(1 - \big(\frac{-3}{n/4}\big)\big) \sigma(n/4) \\
& & + \, \frac{1}{2} \sigma(n/9) + \frac{8}{3} \sigma(n/12) - 2 \sigma(n/36), \\
r_Q^{1, 1, 1, 0; 3}(n) & = & \sigma(n/3) - \sigma(n/9) - 4 \sigma(n/12) + 4 \sigma(n/36), \\
r_Q^{1, 1, 1, 1; 3}(n) & = & \frac{1}{6}\big(1 + \big(\frac{-3}{n}\big)\big) \sigma(n) - \frac{2}{3} \sigma(n/3) - \frac{2}{3} \big(1 + \big(\frac{-3}{n/4}\big)\big) \sigma(n/4) \\
& & + \, \frac{1}{2} \sigma(n/9) + \frac{8}{3} \sigma(n/12) - 2 \sigma(n/36) - \frac{1}{3} c(n).
\end{eqnarray*}
\end{example}

\begin{example}
For $Q(x) = x_1^2 + x_2^2 + 2x_3^2 + 2x_4^2$, we see that $f_Q^{u; 2}(\tau)$ is contained in the space $M_2(\Gamma_0(32))$. Its basis is given as
\[ M_2(\Gamma_0(32)) = \big( \bigoplus_{1<a|32} \mathbb C E_2^{1, 1, a} \big) \oplus \big( \bigoplus_{b=1, 2} \mathbb C E_2^{\chi_{-4}, \chi_{-4}, b} \big) \oplus \mathbb C \eta(4\tau)^2 \eta(8\tau)^2. \]

Let $c(n)$ denote the $n$th coefficient of the Fourier expansion of the cusp form $\eta(4\tau)^2 \eta(8\tau)^2$, namely,
\[ \eta(4\tau)^2 \eta(8\tau)^2 = \sum_{n=1}^\infty c(n) q^n = q - 2q^5 - 3q^9 + 6q^{13} + 2q^{17} - q^{25} + \cdots. \]
We then find for every $n \in \mathbb N$
\begin{eqnarray*}
r_Q^{1, 0, 0, 0; 2}(n) & = & \frac{1}{2} \big( 1 + \big( \frac{-4}{n} \big) \big) \sigma(n) - \frac{3}{2} \sigma(n/2) + \sigma(n/4) + c(n), \\
r_Q^{1, 1, 0, 0; 2}(n) & = & 2 \big( 1 + \big( \frac{-4}{n/2} \big) \big) \sigma(n/2) - 6\sigma(n/4) + 4\sigma(n/8), \\
r_Q^{0, 0, 1, 0; 2}(n) & = & 2\sigma(n/2) - 6\sigma(n/4) + 4\sigma(n/8), \\
r_Q^{1, 0, 1, 0; 2}(n) & = & \frac{1}{2} \big( 1 - \big( \frac{-4}{n} \big) \big) \sigma(n) - \frac{3}{2} \sigma(n/2) + \sigma(n/4), \\
r_Q^{1, 1, 1, 0; 2}(n) & = & 8\sigma(n/4) - 24\sigma(n/8) + 16\sigma(n/16), \\
r_Q^{0, 0, 1, 1; 2}(n) & = & 4\sigma(n/4) + 4\sigma(n/8) - 40\sigma(n/16) + 32 \sigma(n/32), \\
r_Q^{1, 0, 1, 1; 2}(n) & = & \frac{1}{2} \big( 1 + \big( \frac{-4}{n} \big) \big) \sigma(n) - \frac{3}{2} \sigma(n/2) + \sigma(n/4) - c(n), \\
r_Q^{1, 1, 1, 1; 2}(n) & = & 2 \big( 1 - \big( \frac{-4}{n/2} \big) \big) \sigma(n/2) - 6\sigma(n/4) + 4\sigma(n/8).
\end{eqnarray*}
\end{example}

\begin{example}
Consider the quadratic form $Q(x) = x_1^2 + x_1x_2 + x_2^2 + x_3^2 + x_3x_4 + x_4^2$. By definition we see that
\[ r_Q^{u_1, u_2, u_3, u_4; 3}(n) = r_Q^{\pm u_2, \pm u_1, u_3, u_4; 3}(n) = r_Q^{u_3, u_4, u_1, u_2; 3}(n) = r_Q^{u_1+u_2, -u_2, u_3, u_4; 3}(n) \] and we infer that $f_Q^{u; 3}(\tau)$ is contained in $M_{2}(\Gamma_0(27))$ whose basis is given as
\[ M_{2}(\Gamma_0(27)) = \big( \bigoplus_{a = 3, 9, 27} \mathbb C E_2^{1, 1, a} \big) \oplus \big( \bigoplus_{b = 1, 3} \mathbb C E_2^{\chi_{-3}, \chi_{-3}, b} \big) \oplus \mathbb C \eta(3\tau)^2 \eta(9\tau)^2. \]
If we set
\[ \eta(3\tau)^2 \eta(9\tau)^2 = \sum_{n = 1}^\infty c(n) q^n = q - 2q^4 - q^7 + 5q^{13} + 4q^{16} - 7 q^{19} - 5q^{25} + \cdots, \]
then we find for any $n \in \mathbb N$
\begin{eqnarray*}
r_Q^{1, 0, 0, 0; 3}(n) & = & \frac{1}{6} \big( 1 + \big( \frac{-3}{n} \big) \big) \sigma(n) - \frac{2}{3} \sigma(n/3) + \frac{1}{2} \sigma(n/9) + \frac{2}{3} c(n), \\
r_Q^{1, 1, 0, 0; 3}(n) & = & \frac{3}{2} \big( 1 + \big( \frac{-3}{n/3} \big) \big) \sigma(n/3) - 6 \sigma(n/9) + \frac{9}{2} \sigma(n/27), \\
r_Q^{1, 1, 1, 0; 3}(n) & = & \frac{1}{6} \big( 1 + \big( \frac{-3}{n} \big) \big) \sigma(n) - \frac{2}{3} \sigma(n/3) + \frac{1}{2} \sigma(n/9) - \frac{1}{3} c(n), \\
r_Q^{1, 1, 1, 1; 3}(n) & = & \frac{3}{2} \big( 1 - \big( \frac{-3}{n/3} \big) \big) \sigma(n/3) - 6 \sigma(n/9) + \frac{9}{2} \sigma(n/27), \\
r_Q^{1, 0, 1, 0; 3}(n) & = & \frac{1}{6} \big( 1 - \big( \frac{-3}{n} \big) \big) \sigma(n) - \frac{2}{3} \sigma(n/3) + \frac{1}{2} \sigma(n/9).
\end{eqnarray*}
\end{example}

\begin{example}
Let $Q(x) = x_1^2 + x_1x_2 + 2x_2^2 + x_3^2 + x_3x_4 + 2x_4^2$. We easily infer from the definition of $r_Q^{u; 2}(n)$ that
\[ r_Q^{u_1, u_2, u_3, u_4; 2}(n) = r_Q^{u_3, u_4, u_1, u_2; 2}(n) = r_Q^{u_1+u_2, -u_2, u_3, u_4; 2}(n). \]
It follows that $f_Q^{u; 2}(\tau)$ is contained in the space $M_{2}(\Gamma_0(28))$ whose basis is given as
\[ M_{2}(\Gamma_0(28)) = \big( \bigoplus_{1<t|28} \mathbb C E_2^{1, 1, t} \big) \oplus \mathbb C \eta(\tau)\eta(2\tau)\eta(7\tau) \eta(14\tau) \oplus \mathbb C \eta(2\tau)\eta(4\tau)\eta(14\tau) \eta(28\tau). \]
If we set
\[ \eta(\tau)\eta(2\tau)\eta(7\tau) \eta(14\tau) = \sum_{n = 1}^\infty c(n) q^n = q - q^2 - 2q^3 + q^4 + 2q^6 + q^7 - q^8 + \cdots, \]
then one has for every $n \in \mathbb N$
\begin{eqnarray*}
r_Q^{1, 0, 0, 0; 2}(n) & = & \frac{2}{3}\sigma(n) - 2\sigma(n/2) + \frac{4}{3}\sigma(n/4) - \frac{14}{3}\sigma(n/7) + 14\sigma(n/14) \\
& & - \, \frac{28}{3}\sigma(n/28) + \frac{4}{3} c(n) + \frac{4}{3} c(n/2), \\
r_Q^{1, 0, 1, 0; 2}(n) & = & \frac{4}{3}\sigma(n/2) - \frac{4}{3}\sigma(n/4) - \frac{28}{3}\sigma(n/14) + \frac{28}{3} \sigma(n/28) + \frac{8}{3} c(n/2), \\
r_Q^{1, 0, 0, 1; 2}(n) & = & \frac{2}{3}\sigma(n) - 2\sigma(n/2) + \frac{4}{3}\sigma(n/4) - \frac{14}{3}\sigma(n/7) + 14\sigma(n/14) \\
& & - \, \frac{28}{3}\sigma(n/28) - \frac{2}{3} c(n) - \frac{2}{3} c(n/2), \\
r_Q^{0, 1, 0, 0; 2}(n) & = & \frac{4}{3}\sigma(n/2) - \frac{4}{3}\sigma(n/4) - \frac{28}{3}\sigma(n/14) + \frac{28}{3} \sigma(n/28) + \frac{2}{3} c(n/2), \\
r_Q^{0, 1, 0, 1; 2}(n) & = & \frac{4}{3}\sigma(n/2) - \frac{4}{3}\sigma(n/4) - \frac{28}{3}\sigma(n/14) + \frac{28}{3} \sigma(n/28) - \frac{4}{3} c(n/2).
\end{eqnarray*}
\end{example}

\bibliographystyle{amsplain}

\end{document}